\theoremstyle{plain}
\newtheorem{corollary}{Corollary}
\newtheorem{lemma}{Lemma}
\newtheorem*{theorem*}{Theorem}
\newtheorem*{corollary*}{Corollary}
\newtheorem*{lemma*}{Lemma}
\newtheorem*{proposition*}{Proposition}
\theoremstyle{definition}
\newtheorem*{remark*}{Remark}
\numberwithin{equation}{section}
\renewcommand\labelenumi{\textup{\alph{enumi})}}
\renewcommand\theenumi\labelenumi
\def\@makefnmark{\hbox{(\@textsuperscript{\normalfont\@thefnmark})}}
\DeclareFontFamily{U}{mathx}{\hyphenchar\font45}
\DeclareFontShape{U}{mathx}{m}{n}{
      <5> <6> <7> <8> <9> <10>
      <10.95> <12> <14.4> <17.28> <20.74> <24.88>
      mathx10
      }{}
\DeclareSymbolFont{mathx}{U}{mathx}{m}{n}
\DeclareMathAccent{\widecheck}{0}{mathx}{"71}
\DeclareMathAccent{\wideparen}{0}{mathx}{"75}
\newcommand\nat{\mathds{N}}
\newcommand\real{{\mathds{R}}}
\newcommand\comp{{\mathds{C}}}
\newcommand\HH{{\mathds{H}}}
\newcommand\I{\mathds{1}}
\newcommand\id{\mathop{\mathrm{id}}}
\newcommand\supp{\mathop{\mathrm{supp}}}
\newcommand\RE{\mathop{\mathrm{Re}}}
\newcommand\rn{{\mathds{R}^n}}
\newcommand{\Dcal}{\mathcal{D}}
\newcommand{\Lcal}{\mathcal{L}}
\newcommand{\Scal}{\mathcal{S}}
\newcommand{\boxperp}{[\perp]}
\newcommand{\scalp}[2]{\langle #1,\,#2\rangle}
\newcommand\normal{\color{black}}
\newcommand\rene{\color{red}}
\begin{document}
\title[The Liouville property for L\'evy generators]{\bfseries On the Liouville property for non-local L\'evy generators}


\author[V.~Knopova]{Victoriya Knopova}

\author[R.L.~Schilling]{Ren\'e L.\ Schilling}
\address{TU Dresden\\ Fakult\"{a}t Mathematik\\ Institut f\"{u}r Mathematische Stochastik\\ 01062 Dresden, Germany}
\email{victoria.knopova@tu-dresden.de, rene.schilling@tu-dresden.de}
\thanks{\emph{Acknowledgement}. We thank Moritz Kassmann for drawing our attention to the preprint \cite{ali-et-al} of Alibaud, del Teso, Endal and Jakobsen. We are grateful to Espen Jakobsen who sent us the latest version of \cite{ali-et-al} and whose comments were most helpful. Bj\"{o}rn B\"{o}ttcher, Wojciech Cygan, Franziska K\"{u}hn, Niels Jacob and Zolt\'an Sasv\'ari read various earlier versions, pointed out mistakes and made valuable suggestions -- a big thank you, too.}

\begin{abstract}
    We prove a necessary and sufficient condition for the Liouville property of the infinitesimal generator of a L\'evy process and subordinate L\'evy processes. Combining our criterion with the necessary and sufficient condition obtained by Alibaud \emph{et al.}, we obtain a characterization of  (orthogonal subogroup of the)  the set of zeros of the characteristic exponent of the L\'evy process.
\end{abstract}
\subjclass[2010]{\emph{Primary:} 60G51, 35B53. \emph{Secondary:} 31C05, 35B10, 35R09, 60J35.}
\keywords{Characteristic exponent; L\'evy generator; Liouville property; strong Liouville property; subordination.}

\maketitle

A $C^2$-function $f:\rn\to\real$ is called \textbf{harmonic}, if $\Delta f = 0$ for the Laplace operator $\Delta$. The classical Liouville theorem states that any bounded harmonic function is constant. Often it is helpful to understand $\Delta f$ as a Schwartz distribution in $\Dcal'(\rn)$ and to re-formulate the Liouville problem in the following way: The operator $\Delta$ enjoys the \textbf{Liouville property} if
\begin{gather}\label{liou}
    f\in L^\infty(\rn) \;\;\text{and}\;\; \forall \phi\in C_c^\infty(\rn)\::\:\scalp{\Delta f}{\phi} := \scalp{f}{\Delta \phi} = 0 \implies f\equiv\textup{const.}
\end{gather}
holds; $\scalp{\cdot}{\cdot}$ denotes the (real) dual pairing used in the theory of distributions. An excellent account on the history and the importance of the Liouville property can be found in the paper \cite{ali-et-al} by Alibaud \emph{et al.}

Like \cite{ali-et-al} we are interested in the analogue of \eqref{liou} for a class of non-local operators with constant `coefficients'. Recall that $\frac 12\Delta$ is the infinitesimal generator of Brownian motion. This is a diffusion process with stationary and independent increments and continuous paths. If we give up the continuity of the sample paths, and consider stochastic processes with independent, stationary increments and right-continuous paths with finite left-hand limits, we get the family of \textbf{L\'evy processes}, cf.\ \cite{barca}. It is well known, see \cite{jac-1,barca}, that the infinitesimal generator $\Lcal_\psi$ of a L\'evy process is a \textbf{pseudo-differential operator}
\begin{gather}\label{pseudo}
    \widehat{\Lcal_\psi u}(\xi) = -\psi(\xi)\widehat u(\xi),\quad u\in \Scal(\rn)
\end{gather}
where $\widehat u(\xi) = (2\pi)^{-n}\int_\rn e^{-ix\cdot\xi} u(x)\,dx$ is the Fourier transform and $\Scal(\rn)$ is the Schwartz space of rapidly decreasing smooth functions; the inverse Fourier transform is denoted by $\widecheck u$.  The \textbf{symbol} $\psi:\rn\to\comp$ is a \textbf{continuous and negative definite} function which is uniquely characterized by its \textbf{L\'evy--Khintchine representation}
\begin{gather}\label{lkf}
    \psi(\xi)
    = -ib\cdot\xi + \frac 12 Q\xi\cdot\xi + \int_{\rn\setminus\{0\}}\left(1-e^{i x\cdot\xi}-ix\cdot\xi\I_{(0,1)}(|x|)\right)\nu(dx);
\end{gather}
the `coefficients' $b\in\rn$, $Q\in\real^{n\times n}$ (a positive semidefinite matrix) and $\nu$ (a Radon measure on $\rn\setminus\{0\}$ such that $\int_{\rn\setminus\{0\}} \min\{|x|^2,1\}\,\nu(dx)<\infty$) uniquely describe $\psi$.
From \eqref{pseudo} we can see that the $L^2$-adjoint $\Lcal_\psi^*$ is again a L\'evy generator whose symbol $\overline\psi$ is the complex conjugate of $\psi$: Let $u,\phi\in\Scal(\rn)$. By Plancherel's theorem
\begin{gather}\label{plancherel}
    \scalp{\Lcal_\psi u}{\phi}
    =\scalp{\widehat{\Lcal_\psi u}}{\widecheck \phi}
    =\scalp{-\psi\widehat{u}}{\widecheck \phi}
    =\scalp{\widehat{u}}{-\psi\widecheck \phi}
    =\scalp{\widehat{u}}{\widecheck{\Lcal_{\overline\psi} \phi}}
    =\scalp{u}{\Lcal_{\overline\psi} \phi}.
\end{gather}

If we combine \eqref{pseudo} and \eqref{lkf} we get a further representation for $\Lcal_\psi$ on $\Scal(\rn)$ 
\begin{gather}\label{integro}\begin{aligned}
    \Lcal_\psi u(x)
    &= b\cdot\nabla u(x) + \frac 12\nabla \cdot Q\nabla u(x) \\
    &\qquad\mbox{} + \int_{\rn\setminus\{0\}} \left(u(x+y)-u(x)-y\cdot\nabla u(x)\I_{(0,1)}(|y|)\right)\nu(dy).
\end{aligned}\end{gather}
Using Taylor's formula it is not hard to see that $\|\Lcal_\psi u\|_{L^p} \leq c\sum_{|\alpha|\leq 2} \|\partial^\alpha u\|_{L^p}$ for all $1\leq p\leq \infty$ and $u\in \Scal(\rn)$, see e.g.~\cite[Lemma~3.4]{ieot}. 
If we use $p=1$ and $\overline\psi$ instead of $\psi$, we conclude that
$\|\Lcal_\psi^*\phi\|_{L^1} = \|\Lcal_{\overline\psi}\phi\|_{L^1} \leq c\sum_{|\alpha|\leq 2} \|\partial^\alpha \phi\|_{L^1} \leq c_{K}\sum_{|\alpha|\leq 2} \|\partial^\alpha \phi\|_{L^\infty}$ for all $\phi\in C_c^\infty(K)$ and all compact sets $K\subset\rn$. This is already the proof of the following lemma\footnote{The lemma is also a direct consequence of the positive maximum principle: If $\phi\in \Scal(\rn)$ attains a positive maximum at $x_0$, then $\Lcal_\psi\phi(x_0)\leq 0$; see \cite[Theorem 6.2]{barca} and \cite[Theorem 2.21]{LM3}.}.
\begin{lemma}\label{lemma-1}
    Let $\Lcal_\psi$ be the generator of a L\'evy process. For every $f\in L^\infty(\rn)$
    \begin{gather}\label{distri}
        \phi \mapsto \scalp{\Lcal_\psi f}{\phi} := \scalp{f}{\Lcal_\psi^*\phi} = \int_\rn f(x) \Lcal_{\overline\psi}\phi(x)\,dx,
        \quad\phi\in C_c^\infty(\rn)
    \end{gather}
    is a Schwartz distribution $\Lcal_\psi f\in\Dcal'(\rn)$ of order $\leq 2$.
\end{lemma}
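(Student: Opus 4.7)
The approach I would take closely follows the sketch indicated in the paragraph preceding the lemma: since $\langle \Lcal_\psi f,\phi\rangle$ is \emph{defined} as $\int f(x) \Lcal_{\overline\psi}\phi(x)\,dx$, the whole matter reduces to (i) making sense of the integral, i.e.\ showing $\Lcal_{\overline\psi}\phi\in L^1(\rn)$ when $\phi\in C_c^\infty(\rn)$, and (ii) establishing a continuity estimate of the form
\[
    |\langle \Lcal_\psi f,\phi\rangle| \leq c_K \|f\|_{L^\infty}\sum_{|\alpha|\leq 2}\|\partial^\alpha\phi\|_{L^\infty},\qquad \phi\in C_c^\infty(K),
\]
which is the very definition of a distribution of order $\leq 2$.

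The heart of the proof is the Sobolev-type bound $\|\Lcal_\psi u\|_{L^p}\leq c\sum_{|\alpha|\leq 2}\|\partial^\alpha u\|_{L^p}$ for $u\in\Scal(\rn)$ and $1\leq p\leq \infty$. I would derive this directly from the integro-differential representation \eqref{integro}. The drift and diffusion terms are controlled trivially by $\|\nabla u\|_{L^p}+\|\nabla^2 u\|_{L^p}$. For the non-local part I would split the integral over $\{|y|<1\}$ and $\{|y|\geq 1\}$. On the small jumps, Taylor's theorem gives pointwise
\[
    \left|u(x+y)-u(x)-y\cdot\nabla u(x)\right| \leq \tfrac12 |y|^2 \sup_{|z|\leq |y|}|\nabla^2 u(x+z)|,
\]
and after an application of Minkowski's inequality in $L^p$ this contributes a factor $\int_{|y|<1}|y|^2\,\nu(dy)<\infty$ times $\|\nabla^2 u\|_{L^p}$. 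On the large jumps, the translation invariance of $L^p$ bounds $\|u(\cdot+y)-u(\cdot)\|_{L^p}\leq 2\|u\|_{L^p}$, so one obtains a factor $\nu(\{|y|\geq 1\})<\infty$ times $\|u\|_{L^p}+\|\nabla u\|_{L^p}$ (the latter absorbing the drift correction from the indicator). The same estimate applies verbatim to $\Lcal_{\overline\psi}$, since the coefficients $(b,Q,\nu)$ are the same for $\psi$ and $\overline\psi$ up to a sign in $b$, which does not affect moduli.

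Having the estimate, I would set $p=1$ and use that for $\phi\in C_c^\infty(K)$ with $K$ compact one has $\|\partial^\alpha\phi\|_{L^1(\rn)}\leq |K|\cdot\|\partial^\alpha\phi\|_{L^\infty}$. Combining this with H\"older's inequality gives
\[
    |\langle f,\Lcal_{\overline\psi}\phi\rangle| \leq \|f\|_{L^\infty}\cdot\|\Lcal_{\overline\psi}\phi\|_{L^1} \leq c_K\|f\|_{L^\infty}\sum_{|\alpha|\leq 2}\|\partial^\alpha\phi\|_{L^\infty},
\]
which is the required continuity in the $\Dcal(\rn)$-topology with a bound only involving derivatives up to order two.

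The only step which is not completely formal is the Minkowski manoeuvre inside the small-jump integral: one must interchange $\int_{|y|<1}\nu(dy)$ and $\|\cdot\|_{L^p}$, and verify that the intermediate object (a parametric family of $L^p$-functions in $x$ indexed by $y$) is jointly measurable and dominated, so that Fubini/Minkowski applies. This is routine but is really the only place where the $\nu$-integrability hypothesis from the L\'evy--Khintchine representation \eqref{lkf} enters substantively; everything else is bookkeeping.
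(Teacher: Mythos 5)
Your proposal is correct and is essentially the paper's own proof: the paper simply cites \cite[Lemma~3.4]{ieot} for the Taylor-based bound $\nnorm{\Lcal_\psi u}_{L^p}\leq c\sum_{|\alpha|\leq 2}\nnorm{\partial^\alpha u}_{L^p}$ and then, exactly as you do, takes $p=1$, passes to $\overline\psi$ (whose triplet is $(-b,Q,\nu(-\cdot\,))$, so the constants are unaffected), and uses $\nnorm{\partial^\alpha\phi}_{L^1}\leq c_K\nnorm{\partial^\alpha\phi}_{L^\infty}$ for $\phi\in C_c^\infty(K)$ together with H\"older to get the order-$\leq 2$ continuity estimate. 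One small point to polish: for $p<\infty$ (in particular the relevant case $p=1$) the sup-form Taylor remainder does not combine with Minkowski's inequality, since $\nnorm{\sup_{|z|\leq|y|}|\nabla^2u(\cdot+z)|}_{L^1}$ is not controlled by $\nnorm{\nabla^2u}_{L^1}$; use instead the integral form $u(x+y)-u(x)-y\cdot\nabla u(x)=\int_0^1(1-t)\,y\cdot\nabla^2u(x+ty)y\,dt$, which by translation invariance yields the factor $\tfrac12|y|^2\nnorm{\nabla^2u}_{L^p}$ directly (and note that no $\nabla u$ term is needed on $\{|y|\geq 1\}$, as the compensator carries the indicator $\I_{(0,1)}(|y|)$).
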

The functional \eqref{distri} has a natural extension to all $\phi\in\Scal'(\rn)$ such that $\Lcal_{\overline\psi} \phi \in L^1(\rn)$. The latter condition means that $\phi\in H^{\psi,2}_1(\rn)$ where $H^{\psi,2}_1(\rn)$ is an anisotropic Bessel potential space, cf.\ \cite{fjs-2}. Since $\Scal(\rn)$ is dense in $H^{\psi,2}_1(\rn)$, \cite[Theorem 2.1.13]{fjs-2}, the extension is an extension by continuity.

We want to characterize those L\'evy generators which satisfy the Liouville property. For this, we need the following results on the zero set of a continuous negative definite function.
\begin{lemma}\label{lemma-2}
    Let $\psi:\rn\to\comp$ be the symbol of the generator of a L\'evy process. The following estimates hold
    \begin{gather}\label{sub-add}
        \sqrt{|\psi(\xi+\eta)|} \leq \sqrt{|\psi(\xi)|} + \sqrt{|\psi(\eta)|},\quad\xi,\eta\in\rn;\\
    \label{period}
        |\psi(\xi)+\overline{\psi(\eta)}-\psi(\xi-\eta)| \leq 4|\psi(\xi)| |\psi(\eta)|,\quad\xi,\eta\in\rn;
    \end{gather}
    in particular, the zero-set $\{\psi=0\} = \{\xi\in\rn : \psi(\xi) = 0\}$ is a closed subgroup of $(\rn,+)$ which is either discrete or of the form $G\oplus E$ for some subspace $E\subset\rn$ and a closed discrete subgroup \textup{(}a relative lattice\textup{)} $G\subset E^\bot$.
    If $\{\psi=0\}\supsetneq\{0\}$ is not trivial, $\psi$ must be periodic, hence bounded.
\end{lemma}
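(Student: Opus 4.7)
My plan is to derive \eqref{sub-add} and \eqref{period} from a Cauchy--Schwarz-type estimate on a positive semi-definite matrix built from $\psi$, and then read off the algebraic structure of the zero set. The crucial input is the well-known equivalence: $\psi$ is continuous negative definite with $\psi(0)=0$ if and only if the Hermitian matrix
\[
M_{jk}:=\psi(\xi_j)+\overline{\psi(\xi_k)}-\psi(\xi_j-\xi_k),\qquad j,k=1,\ldots,N,
\]
is positive semi-definite for every finite choice $\xi_1,\ldots,\xi_N\in\rn$; this is obtained from the positive definiteness of the characteristic function $\phi_t(\xi)=e^{-t\psi(\xi)}$ of $X_t$ by picking off the first-order term in $t$ (via an auxiliary point $\xi_0=0$, $c_0=-\sum_{j\geq 1}c_j$). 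I would also record the Hermitian symmetry $\overline{\psi(\xi)}=\psi(-\xi)$, which reads off \eqref{lkf} directly.

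Applying the matrix inequality to $N=2$ with $\xi_1=\xi$, $\xi_2=\eta$, non-negativity of the $2\times 2$ determinant yields
\[
|\psi(\xi)+\overline{\psi(\eta)}-\psi(\xi-\eta)|^{2}\leq 4\RE\psi(\xi)\RE\psi(\eta)\leq 4|\psi(\xi)||\psi(\eta)|,
\]
from which \eqref{period} follows. For \eqref{sub-add}, I would substitute $-\eta$ for $\eta$ in the estimate above and use $\overline{\psi(-\eta)}=\psi(\eta)$ to obtain $|\psi(\xi+\eta)-\psi(\xi)-\psi(\eta)|\leq 2\sqrt{|\psi(\xi)||\psi(\eta)|}$; the triangle inequality then produces $|\psi(\xi+\eta)|\leq(\sqrt{|\psi(\xi)|}+\sqrt{|\psi(\eta)|})^{2}$, and taking square roots gives \eqref{sub-add}.

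The subgroup assertion is then essentially automatic: continuity of $\psi$ makes $\{\psi=0\}$ closed; $\psi(-\eta)=\overline{\psi(\eta)}=0$ gives closure under inversion; and \eqref{period} forces $\psi(\xi-\eta)=0$ whenever $\psi(\xi)=\psi(\eta)=0$. The decomposition $\{\psi=0\}=G\oplus E$ is then the classical structure theorem for closed subgroups of $(\rn,+)$. For the final statement, any $\eta\in\{\psi=0\}$ reduces \eqref{period} to $\psi(\xi-\eta)=\psi(\xi)$ for all $\xi$, so $\psi$ is invariant under the translation group $\{\psi=0\}$ and factors through $\rn/\{\psi=0\}$. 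The step I anticipate as the main obstacle is the ``hence bounded'' conclusion, since periodicity is in general only in the directions represented by $\{\psi=0\}$; I would argue it by combining this periodicity with the polynomial growth bound $|\psi(\xi)|\leq c(1+|\xi|^{2})$ derived from \eqref{lkf} and with continuity of $\psi$ on a (relatively compact) fundamental domain of $\{\psi=0\}$.
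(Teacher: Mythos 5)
Your derivation of \eqref{sub-add} and \eqref{period} is correct and is, in substance, the standard Schoenberg-type argument that the paper simply outsources to its references (Berg--Forst, Jacob, Khoshnevisan--Schilling): positive semidefiniteness of the matrix $\bigl(\psi(\xi_j)+\overline{\psi(\xi_k)}-\psi(\xi_j-\xi_k)\bigr)_{j,k}$, extracted from the positive definite functions $e^{-t\psi}$, then the $2\times 2$ determinant, then the substitution $\eta\mapsto -\eta$ for subadditivity. One point you should make explicit: what your determinant argument yields is $|\psi(\xi)+\overline{\psi(\eta)}-\psi(\xi-\eta)|^{2}\leq 4|\psi(\xi)|\,|\psi(\eta)|$, with a square on the left, and the unsquared inequality \eqref{period} as printed does not follow from it; in fact \eqref{period} as printed is false (take $\psi(\xi)=|\xi|^{2}$, where the left side is $|2\xi\cdot\eta|$ and the right side $4|\xi|^{2}|\eta|^{2}$, and let $\xi=\eta\to 0$). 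So your computation has silently corrected a typo in the statement; this is harmless for the lemma, since only the vanishing of the right-hand side is ever used, both for the subgroup property and for periodicity. Your subgroup argument (closedness from continuity, inversion from $\overline{\psi(\eta)}=\psi(-\eta)$, subtraction from \eqref{period}, then Bourbaki's structure theorem) matches the paper, which uses \eqref{sub-add} instead of \eqref{period} for the group law -- an immaterial difference.

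The genuine gap sits exactly where you predicted it, and your proposed patch does not close it. A fundamental domain of $\rn$ modulo $\{\psi=0\}$ is relatively compact only when the zero group is cocompact, i.e.\ when $\dim E+\operatorname{rank}G=n$ in the decomposition $G\oplus E$; if, say, $\{\psi=0\}$ is a rank-one lattice in $\real^{2}$, the fundamental domain is an unbounded slab, and continuity together with the quadratic growth bound gives nothing. Indeed, no argument can work, because the assertion ``hence bounded'' is false for $n\geq 2$: the function $\psi(\xi_{1},\xi_{2})=(1-\cos\xi_{1})+\xi_{2}^{2}$ is a genuine characteristic exponent (compound Poisson part with L\'evy measure $\frac{1}{2}(\delta_{(1,0)}+\delta_{(-1,0)})$ plus a one-dimensional Brownian component), its zero set is $2\pi\integer\times\{0\}\supsetneq\{0\}$, it is periodic with period $(2\pi,0)$ exactly as the lemma asserts, yet it is unbounded. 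You should know that the paper's own proof contains the identical gap: it concludes ``since $\psi$ is continuous, it is bounded'' from periodicity in the single direction $\eta$, which is valid only for $n=1$ or for a cocompact zero group. What your periodicity argument does prove, and what can be salvaged, is that $\psi$ is bounded on the linear span of $\{\psi=0\}$, where the zero group is cocompact. The flaw also propagates into the paper: the step in Lemma~\ref{lemma-3} asserting that unbounded $\psi$ forces $\{\psi=0\}=\{0\}$ fails for the same example. In summary: your treatment of the inequalities and the group structure is sound and more self-contained than the paper's citations, but the final boundedness claim cannot be proved by your patch -- or by any other -- because it is false as stated.
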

\begin{proof}
    The inequalities \eqref{sub-add}, \eqref{period} are well-known, see Berg and Forst \cite[Proposition 7.15]{ber-for}, Jacob \cite[Lemma 3.621]{jac-1} or \cite[Theorem 6.2]{barca} for various proofs. Since $\psi(-\eta)=\overline{\psi(\eta)}$, the inequality \eqref{sub-add} and the continuity of $\psi$ show that $\{\psi=0\}$ is a closed subgroup of $\rn$. The decomposition follows from a structure result on closed subgroups of $\rn$, see
    Bourbaki \cite[Chapter VII, Theorem 1.2.2]{bourbaki}.  If $\eta\in\{\psi=0\}$ and $\eta\neq 0$, the estimate \eqref{period} shows that $\psi$ is periodic with period $\eta$. Since $\psi$ is continuous, it is bounded.
\end{proof}

\begin{lemma}\label{lemma-3}
    Let $\psi:\rn\to\comp$ be the symbol of the generator of a L\'evy process with coefficients $(b,Q,\nu)$, and denote by $\psi_n$ the symbol with coefficients $(b,Q,\I_{B_n(0)}\nu)$. Then $\psi_n\in C^\infty(\rn)$, $\lim_{n\to\infty}\psi_n = \psi$ uniformly and the zero-sets satisfy
    \begin{gather}\label{zero-set}
        \{\psi = 0\} = \bigcap_{n\in\nat}\{\psi_n = 0\}.
    \end{gather}
\end{lemma}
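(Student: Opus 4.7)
The plan is to verify the three assertions of Lemma~\ref{lemma-3} in turn, using the L\'evy--Khintchine formula~\eqref{lkf}. First, the smoothness $\psi_n\in C^\infty(\rn)$ follows by differentiating under the integral sign in~\eqref{lkf} with $\nu$ replaced by $\nu_n:=\I_{B_n(0)}\nu$. Each $\xi$-derivative of the integrand brings down a factor of $x$, but on the support $B_n(0)$ these factors are bounded by powers of $n$; meanwhile, near $x=0$ the standard Taylor estimate on $1-e^{ix\cdot\xi}-ix\cdot\xi\I_{(0,1)}(|x|)$ (and its $\xi$-derivatives) keeps everything dominated by a function integrable against $\min(|x|^2,1)\,\nu(dx)$. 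Dominated convergence then yields $\psi_n\in C^\infty(\rn)$ and continuity of all derivatives.

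For the uniform convergence $\psi_n\to\psi$, the drift and Gaussian terms in $\psi$ and $\psi_n$ cancel, and because $n\geq 1$ the compensator $-ix\cdot\xi\I_{(0,1)}(|x|)$ contributes nothing on $\{|x|\geq n\}$; hence
\[
|\psi(\xi)-\psi_n(\xi)|=\left|\int_{\{|x|\geq n\}}\bigl(1-e^{ix\cdot\xi}\bigr)\,\nu(dx)\right|\leq 2\nu(\{|x|\geq n\}),
\]
which tends to $0$ uniformly in $\xi$, since $\nu$ is finite on $\{|x|\geq 1\}$ and the sets $\{|x|\geq n\}$ decrease to $\emptyset$.

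For the zero-set identity~\eqref{zero-set}, the inclusion $\bigcap_n\{\psi_n=0\}\subseteq\{\psi=0\}$ is immediate from the uniform convergence. The converse is the substantive direction. Assuming $\psi(\xi_0)=0$, taking real parts in~\eqref{lkf} gives
\[
\RE\psi(\xi_0)=\tfrac12 Q\xi_0\cdot\xi_0+\int\bigl(1-\cos(x\cdot\xi_0)\bigr)\,\nu(dx)=0,
\]
a sum of non-negative terms, so both vanish. In particular $\cos(x\cdot\xi_0)=1$, hence $e^{ix\cdot\xi_0}=1$, for $\nu$-a.e.\ $x$. Substituting this identity into the defining integral collapses the oscillatory part of the integrand, leaving only the linear compensator on $\{|x|<1\}$; since $B_n(0)\cap B_1(0)=B_1(0)$ for every $n\geq 1$, the resulting expressions for $\psi_n(\xi_0)$ and $\psi(\xi_0)$ coincide, yielding $\psi_n(\xi_0)=\psi(\xi_0)=0$.

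The technical point I expect to handle carefully is that the compensator integral $\int_{|x|<1}(x\cdot\xi_0)\,\nu(dx)$ need not converge absolutely \emph{a priori}, since only $\int\min(|x|^2,1)\,\nu(dx)<\infty$ is given. However, $\cos(x\cdot\xi_0)=1$ forces $x\cdot\xi_0\in 2\pi\integer$ for $\nu$-a.e.\ $x$, so on the ball $|x|<2\pi/|\xi_0|$ one must have $x\cdot\xi_0=0$ $\nu$-a.e. Consequently the compensator integrand vanishes identically $\nu$-a.e.\ on a neighborhood of the origin, and the remaining integration occurs over a set on which $\nu$ is finite. This small-jump bookkeeping is the only genuinely delicate step in the argument.
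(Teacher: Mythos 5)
Your proof is correct, and for the zero-set identity \eqref{zero-set} it takes a genuinely different, more self-contained route than the paper. The smoothness and uniform-convergence parts coincide with the paper's argument (finite moments of the truncated measure plus differentiation under the integral, and the tail bound $|\psi(\xi)-\psi_n(\xi)|\leq 2\nu(B_n^c(0))$). For \eqref{zero-set}, however, the paper first proves the identity for $\RE\psi$ only, using the monotonicity $\RE\psi_n\leq\RE\psi$ (the same non-negativity of $1-\cos(x\cdot\xi)$ you exploit) together with pointwise convergence, and must then transfer the result from $\RE\psi$ to $\psi$ by a case analysis: if $\psi$ is unbounded the zero-sets are trivial by Lemma~\ref{lemma-2}; if $\psi$ is bounded the L\'evy measure is finite (via Berg--Forst's representation of bounded negative definite functions), and one distinguishes whether $\{\RE\psi_{n(k)}=0\}$ is non-trivial along a subsequence --- forcing $\nu$ to be purely atomic, whence $\{\RE\psi_n=0\}=\{\psi_n=0\}$ --- or eventually trivial. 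You bypass this entire dichotomy: from $\RE\psi(\xi_0)=0$ you extract $Q\xi_0\cdot\xi_0=0$ and $e^{ix\cdot\xi_0}=1$ for $\nu$-a.e.\ $x$, and then observe that truncating $\nu$ to $B_n(0)$ with $n\geq 1$ cannot change the value at $\xi_0$, since the truncation only removes mass on $\{|x|\geq n\}$, where the integrand vanishes $\nu$-a.e.\ and the compensator is switched off; this yields the stronger pointwise inclusion $\{\psi=0\}\subset\{\psi_n=0\}$ for each fixed $n$ without invoking Lemma~\ref{lemma-2} or any structure theory. Your one ``delicate step'' can in fact be dispatched even more cheaply: the full integrand $1-e^{ix\cdot\xi_0}-ix\cdot\xi_0\I_{(0,1)}(|x|)$ is absolutely $\nu$-integrable by the standard estimate, and it equals $-ix\cdot\xi_0\I_{(0,1)}(|x|)$ up to a $\nu$-null set, so the compensator integral is automatically well defined; your lattice argument ($x\cdot\xi_0\in 2\pi\integer$, hence $x\cdot\xi_0=0$ for $|x|<2\pi/|\xi_0|$) is nevertheless valid and makes the mechanism explicit. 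In short: the paper's proof recycles structural facts it has already established and that recur in its narrative, while yours is elementary, avoids the bounded/unbounded and atomic/non-atomic case distinctions altogether, and is shorter.
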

\begin{proof}
    Since the L\'evy measure of $\psi_n$ has bounded support, all moments $\int_{1<|x|< n} |x|^k\,\nu(dx)$, $k\in\nat_0$, are finite, and a routine application of parameter-dependent integrals shows that $\psi_n$ is of class $C^\infty$, see also Jacob \cite[Theorem 3.7.13]{jac-1} or \cite[Lemma 2.18]{LM3}. The difference of $\psi-\psi_n$ can be estimated by
    \begin{gather*}
        |\psi(\xi) - \psi_n(\xi)| \leq \int_{|x|\geq 1} |1-e^{ix\cdot\xi}|\,\nu(dx) \leq 2\nu(B_n^c(0))
    \end{gather*}
    which converges to $0$ uniformly in $\xi$ as $n\to\infty$.

    In order to show \eqref{zero-set} we consider first the real case.
    Since $\RE\psi_n \leq \RE\psi$, we have $\{\RE\psi = 0\}\subset \{\RE\psi_n = 0\}$ and $\{\RE\psi = 0\}\subset \bigcap_{n\in\nat}\{\RE\psi_n = 0\}$. The converse inclusion follows from the pointwise convergence $\psi_n(\xi)\to\psi(\xi)$.

    In general, $\psi$ is bounded if, and only if, $\nu$ is a finite measure, and $\psi$ is of the form
    \begin{gather*}
        \psi(\xi) = \int_{\rn\setminus\{0\}}\left(1-e^{ix\cdot\xi}\right)\nu(dx);
    \end{gather*}
    this follows easily from \eqref{lkf}, see also Berg and Forst \cite[Proposition 7.13]{ber-for}. Since $\nu(B_1^c(0))<\infty$,  we see that $\psi$ and $\psi_n$ are at the same time bounded or unbounded.

    If $\psi$ and $\psi_n$ are unbounded, we have $\{\psi=0\}=\{0\}$ and $\{\psi_n=0\}=\{0\}$; this follows from Lemma~\ref{lemma-2}.

    Assume now that $\psi$ and $\psi_n$ are bounded. There are two cases.

    \noindent
    \emph{Case 1:} There is a sequence $n(k)\to\infty$ such that $\{\RE\psi_{n(k)} = 0\}\supsetneq\{0\}$.  This means that the L\'evy measure of $\psi_{n(k)}$ is purely atomic; by construction, it follows that  $\nu$ is on each ball $B_{n(k)}(0)$ of the form
    \begin{gather*}
        \I_{B_{n(k)}}(x)\nu(dx) = \sum_{j\,:\, |b_j| < n(k)} a_j\delta_{b_j}(dx)
        \quad\text{and}\quad
        \nu(dx) = \sum_{j\in\nat} a_j\delta_{b_j}(dx).
    \end{gather*}
    Observe that
    \begin{gather*}
        1-e^{i\xi\cdot b_j}=0
        \iff 1-\cos \xi\cdot b_j = 0
        \iff \sin\xi\cdot b_j=0.
    \end{gather*}
    Thus, $\{\RE\psi_n = 0\}=\{\psi_n = 0\}$ and $\{\RE\psi = 0\}=\{\psi = 0\}$. Since \eqref{zero-set} is true for $\RE\psi_n$ and $\RE\psi$, we get
    \begin{gather*}
        \bigcap_{n\in\nat} \{\psi_n = 0\}
        =\bigcap_{n\in\nat} \{\RE\psi_n = 0\}
        = \{\RE\psi = 0\}
        = \{\psi = 0\}.
    \end{gather*}

    \noindent
    \emph{Case 2:} There is some $n_0$ such that $\{\RE\psi_n=0\}=\{0\}$ for all $n\geq n_0$. Since \eqref{zero-set} is true for $\RE\psi_n$ and $\RE\psi$, we have again
    \begin{gather*}
        \{0\}
        = \bigcap_{n\in\nat} \{\RE\psi_n = 0\}
        = \{\RE\psi = 0\}
        \supset\{\psi = 0\}\supset\{0\},
    \end{gather*}
    and \eqref{zero-set} follows from this and the trivial inclusions $\{\RE\psi_n=0\}\supset\{\psi_n=0\}\supset\{0\}$.
\end{proof}

We can now prove our main theorem.

\begin{theorem*}
    Let $\Lcal_\psi$ be the generator of a L\'evy process. The operator $\Lcal_\psi$ has the Liouville property, i.e.\
    \begin{gather}\label{liouville-levy}
        f\in L^\infty(\rn)\;\;\text{and}\;\;\forall\phi\in C_c^\infty(\rn)\::\:\scalp{\Lcal_\psi f}{\phi} = 0
        \implies f\equiv\textup{const.}
    \end{gather}
    if, and only if, the zero-set satisfies $\{\psi=0\}=\{0\}$.
\end{theorem*}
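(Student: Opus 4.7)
I will handle the two directions separately, beginning with the easier necessity.

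\emph{Necessity.} Suppose $\{\psi=0\}$ contains some $\eta\neq 0$; since $\overline{\psi(-\eta)}=\psi(\eta)=0$, we in fact have $\psi(\pm\eta)=0$. The candidate counter-example is $f(x):=\cos(x\cdot\eta)$, which is bounded and non-constant. For $\phi\in C_c^\infty(\rn)$, Lemma~\ref{lemma-1} gives $\Lcal_{\overline\psi}\phi\in L^1$, so Plancherel reduces $\scalp{f}{\Lcal_{\overline\psi}\phi}$ to evaluations of $-\overline\psi\widehat\phi$ at $\pm\eta$, both of which vanish. Hence $\Lcal_\psi f=0$ in $\Dcal'(\rn)$, contradicting the Liouville property.

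\emph{Sufficiency.} Assume $\{\psi=0\}=\{0\}$ and let $f\in L^\infty(\rn)$ satisfy $\scalp{\Lcal_\psi f}{\phi}=0$ for all $\phi\in C_c^\infty(\rn)$. Viewing $f$ as a tempered distribution, I aim to show $\supp\widehat f\subset\{0\}$; the classical structure theorem for distributions supported at a point then gives $\widehat f=\sum_{|\alpha|\le N}c_\alpha\partial^\alpha\delta_0$, so $f$ is a polynomial, and boundedness forces $f\equiv\mathrm{const}$.

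To localize, fix $\xi_0\in\rn\setminus\{0\}$; by continuity and $\psi(\xi_0)\neq 0$, pick a ball $B=B_r(\xi_0)$ with $|\psi|\ge c_0>0$ on $B$. Given $\chi\in C_c^\infty(B)$, set $g(\xi):=-(2\pi)^{-n}\chi(-\xi)/\overline{\psi(\xi)}$ (continuous and compactly supported, since $|\psi(\xi)|=|\psi(-\xi)|$) and $u:=\widecheck g$. By Paley--Wiener, $u\in C^\infty\cap L^\infty(\rn)\cap L^2(\rn)$, and the multiplier identity $\widehat{\Lcal_{\overline\psi}u}=-\overline\psi g$ simplifies to $\Lcal_{\overline\psi}u=\widehat\chi\in\Scal(\rn)$. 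Therefore
\[
    \scalp{\widehat f}{\chi}=\scalp{f}{\widehat\chi}=\scalp{f}{\Lcal_{\overline\psi}u},
\]
and the whole sufficiency reduces to showing this last pairing is zero.

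The main obstacle is that $u$ is bounded but not compactly supported, so the hypothesis $\scalp{\Lcal_\psi f}{\phi}=0$ does not apply to $\phi=u$ directly. I would handle this via the anisotropic Bessel potential space $H^{\psi,2}_1(\rn)$ from~\cite{fjs-2} mentioned after Lemma~\ref{lemma-1}: $u\in L^2$ and $\Lcal_{\overline\psi}u\in L^1$ give $u\in H^{\psi,2}_1(\rn)$, and density of $C_c^\infty(\rn)$ in $H^{\psi,2}_1(\rn)$ (via $\Scal(\rn)$ and \cite[Theorem~2.1.13]{fjs-2}) yields $\phi_k\in C_c^\infty(\rn)$ with $\Lcal_{\overline\psi}\phi_k\to\Lcal_{\overline\psi}u$ in $L^1$. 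Since $f\in L^\infty$ this gives $\scalp{f}{\Lcal_{\overline\psi}u}=\lim_k\scalp{f}{\Lcal_{\overline\psi}\phi_k}=0$. Letting $\chi$ and $\xi_0$ range over $C_c^\infty(B)$ and $\rn\setminus\{0\}$ respectively completes the plan.
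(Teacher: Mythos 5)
Your necessity half is correct and is in substance the paper's own argument: the paper reads off bounded non-constant solutions $e^{-ig\cdot x}$, $g\in\{\psi=0\}\setminus\{0\}$, from the structure of $\widecheck f$, while you verify directly that $f(x)=\cos(x\cdot\eta)$ solves $\Lcal_\psi f=0$ in $\Dcal'(\rn)$; this works because $\Lcal_{\overline\psi}\phi\in L^1(\rn)$ has the continuous Fourier transform $-\overline\psi\widehat\phi$, which vanishes at $\pm\eta$ since $\psi(\eta)=\psi(-\eta)=0$. Your sufficiency half, however, takes a genuinely different route from the paper -- a division (parametrix) argument inverting $\overline\psi$ on balls avoiding the zero set, so that $\Lcal_{\overline\psi}u=\widehat\chi$ for $u=\widecheck g$ -- and this is where there is a genuine gap. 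A L\'evy symbol is in general only \emph{continuous}, so $g=-(2\pi)^{-n}\chi(-\cdot)/\overline\psi$ is merely a continuous function with compact support; its inverse transform $u$ is bounded, analytic and in $L^2(\rn)$, but nothing in the construction gives $u\in L^1(\rn)$. The space $H^{\psi,2}_1(\rn)$ is $L^1$-based: membership means $(1+\psi(D))u\in L^1(\rn)$ with the graph norm $\nnorm{u}_{L^1}+\nnorm{\Lcal_{\overline\psi}u}_{L^1}$, and since $(1+\psi)^{-1}$ is the Fourier transform of the (sub-probability) $1$-resolvent kernel one has $H^{\psi,2}_1(\rn)\subset L^1(\rn)$. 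So your inference ``$u\in L^2$ and $\Lcal_{\overline\psi}u\in L^1$ give $u\in H^{\psi,2}_1(\rn)$'' is unjustified, and in general false.

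Concretely, what your last step needs is that $\widehat\chi=\Lcal_{\overline\psi}u$ lies in the $L^1$-closure of $\Lcal_{\overline\psi}C_c^\infty(\rn)$ (then $\scalp{f}{\Lcal_{\overline\psi}u}=\lim_k\scalp{f}{\Lcal_{\overline\psi}\phi_k}=0$ follows from $f\in L^\infty$); the density theorem of \cite[Theorem~2.1.13]{fjs-2} delivers such $\phi_k$ only for $u$ in the graph space $H^{\psi,2}_1(\rn)$, which includes $u\in L^1$ as part of the membership, and no other justification is offered. Note that when $\psi$ is smooth with polynomially bounded derivatives your argument does close: then $g\in C_c^\infty(\rn)$, $u\in\Scal(\rn)\subset H^{\psi,2}_1(\rn)$, and cutoffs $\theta_k$ give $\Lcal_{\overline\psi}(u\theta_k)\to\Lcal_{\overline\psi}u$ in $L^1$ via $\nnorm{\Lcal_{\overline\psi}v}_{L^1}\leq c\sum_{|\alpha|\leq 2}\nnorm{\partial^\alpha v}_{L^1}$. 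But that is precisely the easy half of the paper's proof ($\Scal$-multiplier symbols); the hard case of merely continuous $\psi$ is exactly what the paper's Lemma~\ref{lemma-3} is designed for: it replaces $\psi$ by the smooth, at most quadratically growing truncations $\psi_n$, shows $\Lcal_\psi f=0\implies\Lcal_{\psi_n}f=0$, and recovers $\{\psi=0\}=\bigcap_n\{\psi_n=0\}$. Your plan can be repaired by the same reduction -- run your division argument for each $\psi_n$ and intersect the resulting support constraints -- but as written the key limiting step has no support. (A minor, fixable point in the same part: the identity $\Lcal_{\overline\psi}u=\widehat\chi$ for $u\notin\Scal(\rn)$ needs a one-line justification, e.g.\ dominated convergence applied to \eqref{integro} with $u(x)=\int e^{ix\cdot\xi}g(\xi)\,d\xi$, since \eqref{pseudo} is stated only on $\Scal(\rn)$.)
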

\begin{proof}
    Let $f\in L^\infty(\rn)$ and $\Lcal_\psi f = 0$ in $\Dcal'(\rn)$. In view of Lemma~\ref{lemma-2} we can assume that $\{\psi=0\}$ is a discrete group, otherwise we would have a truly lower-dimensional problem.

    Assume first that $\psi$ is smooth and that all derivatives grow at most polynomially. Since such $\psi$ are multipliers in $\Scal(\rn)$, we see immediately that in $\Scal'(\rn)$
    \begin{gather*}
        0
        = \scalp{\Lcal_\psi f}{\phi}
        = \scalp{f}{\Lcal_{\overline\psi} \phi}
        = \scalp{\widecheck f}{\widehat{\Lcal_{\overline\psi} \phi}}
        = \scalp{\widecheck f}{\overline\psi \widehat{\phi}}
    \end{gather*}
    holds for all $\phi\in\Scal(\rn)$ and all $\widehat\phi\in\Scal(\rn)$ (as the Fourier transform is bijective on $\Scal(\rn)$). We conclude that $\supp\widecheck f\subset \{\overline\psi=0\}=\{\psi=0\}$.

    If $\psi$ is a general L\'evy symbol, we use the smooth approximation $\psi_n$ from Lemma~\ref{lemma-3}. It is known, see Jacob~\cite[Thorem~3.7.13]{jac-1} or \cite[Lemma 2.18]{LM3}, that $\psi_n$ is smooth and grows, together with all derivatives, at most quadratically. Note that for $\phi\in\Scal(\rn)$
    \begin{gather*}
        \widehat{\Lcal_{\overline\psi_n}\phi}
        = \overline\psi_n\widehat\phi
        = \overline\psi\,\frac{\overline\psi_n}{\overline\psi}\,\widehat\phi
        \in \Scal(\rn)
        \implies
        \Lcal_{\overline\psi_n}\phi = \Lcal_{\overline\psi}\left(\widecheck{\frac{\overline\psi_n}{\overline\psi}\,\widehat\phi}\right)
    \end{gather*}
    which means that $\Lcal_\psi f = 0$ implies $\Lcal_{\psi_n}f=0$ for all $f\in L^\infty(\rn)$. Thus, the previous arguments show $\supp f\subset\{\psi_n=0\}$ for all $n\in\nat$ and, again with Lemma~\ref{lemma-3}, we get $\supp f\subset \bigcap_{n\in\nat}\{\psi_n=0\} = \{\psi=0\}$.%

    We can now use a classical result on the structure of tempered distributions supported in single points, see e.g.\
    Tr\`eves \cite[Chapter 24]{treves},
    \begin{gather*}
        \smash[b]{\widecheck f = \sum_{g\in\{\psi=0\}} \sum_{|\alpha|\leq n(g)<\infty} c_{\alpha,g} \partial^\alpha\delta_g}
    \intertext{or, equivalently,} 
        f(x) = \sum_{g\in\{\psi=0\}} \sum_{|\alpha|\leq n(g)<\infty} (2\pi)^{-n}c_{\alpha,g} (ix)^\alpha e^{-ig\cdot x}.
    \end{gather*}
    If, in addition, $f$ is bounded, we have $f(x) = (2\pi)^{-n}\sum_{g\in\{\psi = 0\}} c_{0,g} e^{-ig\cdot x}$.

    If $\{\psi=0\}=\{0\}$, it is clear that the Liouville property holds. Conversely, if the Liouville property holds and if $\{\psi=0\}$ is not trivial, we can always shift a solution $\Lcal_\psi f=0$ in the following way: $f_h(x):= f(x)e^{-ih\cdot x}$, $h\in\{\psi=0\}$ and we get again $\Lcal_\psi f_h = 0$; this means that can always achieve that some $c_{0,g}\neq 0$, $g\neq 0$, and we have reached a contradiction to $f_h\equiv\text{const.}$ Thus, the above representation shows that the Liouville property holds if, and only if, $\{\psi = 0\}$ is trivial. If $\{\psi = 0\}\supsetneq\{0\}$, the solutions to $\Lcal_\psi f = 0$ are periodic with periodicity group given by the orthogonal subgroup (cf.\ \cite[Definition~2.8]{ber-for}) of the zero-set $\{\psi = 0\}^{\boxperp} := \{\xi\in\rn \mid \forall g\in\{\psi=0\} \,:\, e^{i\xi\cdot g}=1\}$.
\end{proof}

Using \textbf{Bochner's subordination}, cf.\ \cite[Chapter 13]{SSV}, we can give a further characterization of the Liouville property. If $(T_t)_{t\geq 0}$ is any strongly continuous contraction semigroup and $(\gamma_t)_{t\geq 0}$ a vaguely continuous convolution semigroup of probability measures on the half line $[0,\infty)$, then $T^g_t u := \int_0^\infty T_su\,\gamma_t(ds)$ (understood as a Bochner integral) is again a strongly continuous contraction semigroup. The superscript $g$ in $T^g_t$ denotes a \textbf{Bernstein function}. Bernstein functions uniquely characterize the convolution semigroup $(\gamma_t)_{t\geq 0}$ via the (one-sided) Laplace transform $\widetilde\gamma_t(\lambda) := \int_{[0,\infty)} e^{-\lambda s}\,\gamma_t(ds) = e^{-t g(\lambda)}$, and all Bernstein functions are of the form
\begin{gather}\label{bernstein}
    g(\lambda) = a\lambda + \int_{(0,\infty)} \left(1-e^{-\lambda s}\right)\pi(ds)
\end{gather}
where $a\geq 0$ and $\pi$ is a measure on $(0,\infty)$ satisfying $\int_0^\infty \min\{s,1\}\,\pi(ds)<\infty$, see \cite[Chapter 3]{SSV}. Of the many properties of Bernstein functions let us only note that any $g\not\equiv 0$ is strictly monotone. Typical examples of Bernstein functions are fractional powers $g(\lambda)=\lambda^\alpha$, $0<\alpha<1$, ($a=0$, $\pi(ds) = \frac{\alpha}{\Gamma(1-\alpha)} s^{-1-\alpha}\,ds$), logarithms $g(\lambda)=\log(1+\lambda)$ ($a=0$, $\pi(ds) = s^{-1}e^{-s}\,ds$), `resolvents' $g(\lambda) = \frac{\lambda}{\tau+\lambda}$, $\tau>0$, ($a=0$, $\pi(ds) = \tau e^{-s\tau}\,ds$) and `semigroups' $g(\lambda) = 1-e^{-\lambda t}$, $t>0$, ($a=0$, $\pi(ds) = \delta_t(ds)$).

If we extend $g$ to the complex right-half plane $\HH = \{\lambda+i\eta \mid \lambda\geq 0, \eta\in\real\}$, we see from \eqref{bernstein} that
\begin{gather}\label{ext}
    \RE g(\zeta) = a\lambda + \int_{(0,\infty)} \left(1-e^{-\lambda s}\cos(\eta s)\right)\pi(ds),\quad \zeta=\lambda+i\eta \in\HH.
\end{gather}
Note that $e^{-\lambda s} < 1$ if $\lambda, s > 0$. Thus, if $a>0$, $g(\zeta)=0$ if, and only if, $\zeta=0$. If $a=0$ and $\lambda>0$, the inequality
\begin{gather*}
    1-e^{-\lambda s}\cos(\eta s) > 1-|\cos(\eta s)| \geq 0,\quad \lambda, s>0
\end{gather*}
shows that $g(\zeta)=0$ can only happen if $\lambda=0$. In this case we also need that
\begin{gather*}
    \int_{(0,\infty)} \left(1-\cos(\eta s)\right)\pi(ds) = 0
\end{gather*}
which is only possible for $\eta\neq 0$ if $\supp\pi$ is discrete\footnote{If we compare \eqref{lkf} and \eqref{bernstein} for $\lambda = i\eta$, we see that $\eta\mapsto f(i\eta)$ is a continuous and negative definite function, and the exact structure of $\supp\pi$ is given in Corollary~\ref{cor-2} below.}.

If $\psi$ is the symbol of a L\'evy process, then $g\circ\psi$ is also the symbol of a L\'evy process and $-g(-\Lcal_\psi) = \Lcal_{g\circ\psi}$ on $\Scal(\rn)$; here, $-g(-\Lcal_\psi)$ is understood as a function of the operator $-\Lcal_\psi$ in virtually any reasonable functional calculus sense, see \cite[Chapter 13]{SSV}.

\begin{corollary}\label{cor-1}
    Let $\Lcal_\psi$ be the generator of a L\'evy process and $g$ be a Bernstein function given by \eqref{bernstein}. If $a\neq 0$ or if $\supp\pi$ is not discrete, then $\Lcal_\psi$ has the Liouville property if, and only if, $\Lcal_{g\circ\psi}$ has the Liouville property.

    In particular, $\Lcal_\psi$ has the Liouville property if, and only if, for some \textup{(}or all\textup{)} $\tau>0$ the resolvent $R_\tau := (\tau\id-\Lcal_\psi)^{-1}$ enjoys the following property:
    \begin{gather}\label{liou-res}
        f\in L^\infty(\rn)\;\;\text{and}\;\; \tau R_\tau f = f \implies f\equiv\textup{const.}
    \end{gather}

    If, in addition, $\Lcal_\psi$ is self-adjoint, i.e.\ if $\psi = \overline\psi$ is real-valued, then the Liouville property of $\Lcal_\psi$ is equivalent to the following property of the semigroup $P_t = e^{t\Lcal_\psi}$ generated by $\Lcal_\psi$: For some \textup{(}or all\textup{)} $t>0$ one has
    \begin{gather}\label{liou-semi}
        f\in L^\infty(\rn)\;\;\text{and}\;\; P_t f = f \implies f\equiv\textup{const.}
    \end{gather}
\end{corollary}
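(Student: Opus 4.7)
The plan is to reduce each equivalence in the corollary to the main Theorem, which characterizes the Liouville property of a L\'evy generator by the triviality of its zero-set. Everything will then follow from comparing $\{g\circ\psi=0\}$ with $\{\psi=0\}$ and from identifying $\Lcal_{g\circ\psi}$ with the corresponding resolvent or semigroup expression via Fourier / functional calculus.

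For the first assertion I would observe that $g\circ\psi$ is again the symbol of a L\'evy process, so by the main Theorem $\Lcal_{g\circ\psi}$ has the Liouville property if, and only if, $\{g\circ\psi=0\}=\{0\}$. Since $g(0)=0$ by \eqref{bernstein}, the inclusion $\{\psi=0\}\subseteq\{g\circ\psi=0\}$ is automatic. For the reverse, \eqref{lkf} gives $\RE\psi\ge 0$, so $\psi(\rn)\subset\HH$, and the analysis following \eqref{ext} shows that under the hypothesis $a\neq 0$ or $\supp\pi$ non-discrete, the only zero of $g$ in $\HH$ is $0$. Hence $\{g\circ\psi=0\}=\{\psi=0\}$ and the two Liouville properties coincide.

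For the resolvent claim I would apply this to $g_\tau(\lambda)=\lambda/(\tau+\lambda)=1-\tau/(\tau+\lambda)$, which is Bernstein with $a=0$ and $\pi_\tau(ds)=\tau e^{-\tau s}\,ds$ of non-discrete support. On the Fourier side the symbol of $\tau R_\tau-\id$ equals $\tau/(\tau+\psi(\xi))-1=-g_\tau(\psi(\xi))$, which matches the symbol of $\Lcal_{g_\tau\circ\psi}$; transferred to the distributional level via Lemma~\ref{lemma-1} and \eqref{plancherel}, this gives $\Lcal_{g_\tau\circ\psi}f=\tau R_\tau f-f$ for $f\in L^\infty(\rn)$, so $\tau R_\tau f=f\iff\Lcal_{g_\tau\circ\psi}f=0$. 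Independence of $\tau$ is built in, since the criterion reduces to $\{\psi=0\}=\{0\}$. For the semigroup case in the self-adjoint setting I would take $g_t(\lambda)=1-e^{-\lambda t}$; since $\pi_t=\delta_t$ is discrete, the first part does not directly cover it, but self-adjointness forces $\psi$ to be real, so $\psi(\rn)\subset[0,\infty)$ and on $[0,\infty)$ we have $g_t(\lambda)=0\iff\lambda=0$. Hence $\{g_t\circ\psi=0\}=\{\psi=0\}$, and the main Theorem applies to $\Lcal_{g_t\circ\psi}=P_t-\id$, turning its Liouville property into \eqref{liou-semi}.

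The main obstacle I expect is the rigorous identification, for $f\in L^\infty(\rn)$ where $\Lcal_\psi$ and $\Lcal_{g\circ\psi}$ act only as Schwartz distributions, of $\Lcal_{g\circ\psi}f$ with $\tau R_\tau f-f$ and with $P_t f-f$. This should be done by pairing against $\phi\in C_c^\infty(\rn)$ via $\scalp{\Lcal_{g\circ\psi}f}{\phi}=\scalp{f}{\Lcal_{\overline{g\circ\psi}}\phi}$, combining the Fourier identities above with the continuity of the adjoint resolvent/semigroup on the Bessel potential space $H^{\psi,2}_1(\rn)$ introduced after Lemma~\ref{lemma-1}. Once these identifications are in place, the three claims reduce cleanly to the main Theorem.
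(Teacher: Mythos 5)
Your proposal is correct and takes essentially the same route as the paper: reduce all three claims to the main Theorem by showing $\{g\circ\psi=0\}=\{\psi=0\}$ (since $0$ is the only zero of $g$ in $\HH$ under the hypothesis $a\neq 0$ or $\supp\pi$ non-discrete, resp.\ on $[0,\infty)$ by strict monotonicity in the self-adjoint case), then specialize to $g(\lambda)=\lambda/(\tau+\lambda)$ and $g(\lambda)=1-e^{-\lambda t}$ with the multiplier identifications $\Lcal_{g\circ\psi}=\tau R_\tau-\id$ and $P_t-\id$. The distributional identification you flag as the main obstacle is exactly what the paper handles (tersely) via the formulas $\widehat{R_\tau\phi}$ and $\widehat{P_t\phi}$, so your more explicit treatment is a harmless elaboration of the same argument.
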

\begin{proof}
    The first assertion follows from the observation that $\{\zeta\in\HH \mid g(\zeta)=0\}=\{0\}$ implies $\{g\circ \psi = 0\}=\{\psi = 0\}$ and the main Theorem.

    For the second part use the Bernstein function $g(\lambda)=\frac{\lambda}{\tau+\lambda}$ and note that $\lambda=0$ is the only zero of $g$ in $\HH$; moreover $\widehat {R_\tau\phi} = \psi (\tau+\psi)^{-1}\widehat\phi$.

    Finally, if $\psi$ is real-valued, i.e.\ if $\Lcal_\psi$ is self-adjoint cf.\ \eqref{bernstein}, there is no need to extend $g$ to $\HH$. In this case, $\{\lambda \geq 0 \mid g(\lambda)=0\}=\{0\}$ is always trivial  (because of the strict monotonicity of $g$)  and we can use the previous argument for $g(\lambda) = 1-e^{-\lambda t}$ and the semigroup $P_t$: Note that $\widehat{P_t\phi} = e^{-t\psi}\widehat\phi$.
\end{proof}

The paper \cite{ali-et-al} by Alibaud \emph{et al.}\ contains another characterization of the Liouville property for L\'evy generators using completely different methods: It is based on the characteristic triplet $(b,Q,\nu)$ appearing in the L\'evy--Khintchine representation \eqref{lkf} of $\psi$. If we combine our Theorem with the result of \cite{ali-et-al}, we arrive at an interesting description of the zero-set of a negative definite function.

We need the following notation from \cite{ali-et-al}.
Let $\Sigma$ be the positive semidefinite square root of $Q$ and denote by $\sigma_1,\dots,\sigma_n$ the column vectors of $\Sigma$.
Let $G_\nu = G(\supp\nu)$ be the smallest additive subgroup of $\rn$ containing $\supp\nu$, $V_\nu = \{x\in\overline{G_\nu} \::\: tx\in\overline{G_\nu}\;\forall t\in\real\}$  ($\overline G$ stands for the closure of $G\subset\rn$),  $c_\nu = -\int_{\{|y|<1\}\setminus V_\nu} y\,\nu(dy)$ and $W_{\Sigma,b+c_\nu} = \mathrm{span}\{\sigma_1,\dots,\sigma_n, b+c_\nu\}$.
\begin{corollary}\label{cor-2}
    Let $\psi$ be the characteristic exponent of a L\'evy process. Denote by $\{\psi = 0\}^{\boxperp}$ the orthogonal subgroup $\{\xi\in\rn \mid \forall x\in\{\psi=0\} : e^{i\xi\cdot x}=1\}$ of the additive group $\{\psi=0\}\subset\rn$. Then the following equality holds
    \begin{gather}
        \{\psi = 0\}^{\boxperp}
        = \overline{G_\nu + W_{\Sigma,b+c_\nu}}.
    \end{gather}
\end{corollary}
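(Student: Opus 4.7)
Write $A:=\overline{G_\nu+W_{\Sigma,b+c_\nu}}$ and $H:=\{\psi=0\}$. Both are closed subgroups of $(\rn,+)$: $A$ by definition and $H$ by Lemma~\ref{lemma-2}. The plan is to prove $H=A^\boxperp$ by direct inspection of the L\'evy--Khintchine formula \eqref{lkf}; once this is established, the claim $\{\psi=0\}^\boxperp=A$ follows from the bipolar theorem for closed subgroups of $\rn$ (Pontryagin duality, cf.\ Berg--Forst~\cite{ber-for}), which gives $H^\boxperp=A^{\boxperp\boxperp}=A$.

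For the inclusion $A^\boxperp\subset H$, fix $\eta\in A^\boxperp$. Since $W_{\Sigma,b+c_\nu}$ and $V_\nu$ are \emph{linear} subspaces of $A$, the requirement $e^{it\eta\cdot w}=1$ for every $t\in\real$ forces $\eta\cdot\sigma_i=0$ for all $i$ (whence $Q\eta=\Sigma\Sigma^{\top}\eta=0$ and $Q\eta\cdot\eta=0$), $\eta\cdot(b+c_\nu)=0$, and $\eta\perp V_\nu$. Moreover, $\supp\nu\subset G_\nu\subset A$, so $\eta\cdot x\in 2\pi\integer$ and hence $e^{ix\cdot\eta}=1$ for $\nu$-a.e.\ $x$. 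Substituting these identities into \eqref{lkf} and splitting the compensating integral $\int_{|x|<1}x\cdot\eta\,\nu(dx)$ according to $V_\nu$ and its complement (the first piece vanishes because $\eta\perp V_\nu$, the second equals $-\eta\cdot c_\nu$ by the definition of $c_\nu$) yields $\psi(\eta)=0$.

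For the converse $H\subset A^\boxperp$, let $\psi(\eta)=0$ and equate real and imaginary parts to zero. The real part, $\tfrac12 Q\eta\cdot\eta+\int(1-\cos(x\cdot\eta))\,\nu(dx)$, is a sum of two non-negative terms, so each vanishes: we obtain $\Sigma^{\top}\eta=0$ and $\cos(\eta\cdot x)=1$ for $\nu$-a.e.\ $x$. The latter extends by $\integer$-linearity to $\eta\cdot y\in 2\pi\integer$ for all $y\in G_\nu$, by continuity to all $y\in\overline{G_\nu}$, and applied to $ty\in V_\nu$ for every $t\in\real$ it forces $\eta\perp V_\nu$. The imaginary part, using $\sin(\eta\cdot x)=0$ $\nu$-a.e.\ and the same $V_\nu$/$V_\nu^c$ decomposition of the compensator, collapses to $\eta\cdot(b+c_\nu)=0$. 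Consequently $\eta\perp W_{\Sigma,b+c_\nu}$ and $\eta\cdot y\in 2\pi\integer$ on $G_\nu$, which together give $\eta\in(G_\nu+W_{\Sigma,b+c_\nu})^\boxperp=A^\boxperp$.

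The main technical point is the finiteness of $\int_{\{|y|<1\}\setminus V_\nu}|y|\,\nu(dy)$, needed both to make $c_\nu$ well defined and to justify the compensator splittings above. It follows from the structural decomposition $\overline{G_\nu}=V_\nu\oplus L_\nu$ (Lemma~\ref{lemma-2}): on $\overline{G_\nu}\setminus V_\nu$ one has $|y|\geq\delta:=\min_{\ell\in L_\nu\setminus\{0\}}|\ell|>0$, so the integral is dominated by $\delta^{-1}\int_{|y|<1}|y|^2\,\nu(dy)<\infty$. As a consistency check, the trivial cases of the identity $H=A^\boxperp$ reproduce exactly the equivalence $\{\psi=0\}=\{0\}\Leftrightarrow\overline{G_\nu+W_{\Sigma,b+c_\nu}}=\rn$ obtained by combining the main Theorem with the Liouville criterion of Alibaud \emph{et al.}~\cite{ali-et-al}.
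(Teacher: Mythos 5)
Your proof is correct, but it takes a genuinely different route from the paper. The paper offers no computation from the triplet $(b,Q,\nu)$ at all: Corollary~\ref{cor-2} is obtained there by matching two descriptions of the bounded solutions of $\Lcal_\psi f=0$ --- the proof of the main Theorem shows these solutions are superpositions of the characters $e^{-ig\cdot x}$, $g\in\{\psi=0\}$, hence periodic exactly with respect to the orthogonal subgroup $\{\psi=0\}^{\boxperp}$, while Alibaud \emph{et al.}\ \cite{ali-et-al} identify the bounded solutions as the functions periodic with respect to $\overline{G_\nu+W_{\Sigma,b+c_\nu}}$; equating the two periodicity groups gives the corollary. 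You instead prove the dual identity $\{\psi=0\}=\bigl(\overline{G_\nu+W_{\Sigma,b+c_\nu}}\bigr)^{\boxperp}$ directly from \eqref{lkf} --- the non-negative real part forces $\Sigma\eta=0$ and $\eta\cdot x\in 2\pi\integer$ on $\supp\nu$, hence (closed subgroup, continuity) on $\overline{G_\nu}$, and $\eta\perp V_\nu$; the imaginary part then collapses to $\eta\cdot(b+c_\nu)=0$ via the $V_\nu$-splitting of the compensator --- and you dualize with the bipolar theorem for closed subgroups of $\rn$. This buys self-containedness: your argument uses nothing from \cite{ali-et-al}, and combined with the main Theorem it actually \emph{re-proves} their Liouville criterion, whereas the paper needs it as a black box; the price is that you must re-establish the well-definedness of $c_\nu$, which \cite{ali-et-al} supply, and your $\delta$-argument for $\int_{\{|y|<1\}\setminus V_\nu}|y|\,\nu(dy)\leq \delta^{-1}\int_{\{|y|<1\}}|y|^2\,\nu(dy)<\infty$ is the right one.

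Two small repairs. First, the splitting $\overline{G_\nu}=V_\nu\oplus L_\nu$ is not Lemma~\ref{lemma-2}, which concerns the group $\{\psi=0\}$ only; what you need is the Bourbaki structure theorem for closed subgroups of $\rn$ cited inside that lemma's proof, applied to $\overline{G_\nu}$, and you should take $L_\nu=\overline{G_\nu}\cap V_\nu^{\perp}$ (discrete by maximality of the subspace $V_\nu$, and orthogonal to it), since your lower bound $|y|^2=|v|^2+|\ell|^2\geq\delta^2$ on $\overline{G_\nu}\setminus V_\nu$ requires $L_\nu\perp V_\nu$. Second, your sign bookkeeping tacitly uses the compensator $+\,ix\cdot\xi\,\I_{(0,1)}(|x|)$, i.e.\ the convention of \cite{ali-et-al}; with \eqref{lkf} exactly as printed (minus sign) the same splitting yields $\eta\cdot(b-c_\nu)=0$ instead. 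The minus sign in \eqref{lkf} is a typo which you silently corrected, but since the definition of $c_\nu$ is convention-dependent, you should state explicitly which form of the L\'evy--Khintchine formula you work with.
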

There is a further characterization of $\{\psi=0\}^{\boxperp}$ which can be found in Berg and Forst \cite[Proposition 8.27]{ber-for}. Denote by $\mu_t$ the probability measure such that $\widehat \mu_t = e^{-t\psi}$, i.e.\ $(\mu_t)_{t\geq 0}$ is the family of transition probabilities of the L\'evy process with exponent $\psi$. Then $\{\psi=0\}^{\boxperp}$ is the smallest closed additive subgroup of $\rn$ which contains $\bigcup_{t>0} \supp\mu_t$.  This implies immediately the following result.
\begin{corollary}\label{cor-3}
    Assume that the L\'evy process with characteristic function $\psi$ has transition probabilities $(\mu_t)_{t\geq 0}$ such that for at least one $t_0>0$ the measure $\mu_{t_0}(dx) = p_{t_0}(x)\,dx$ has a strictly positive density, i.e.\ $p_{t_0}(x) > 0$ for all $x\in\rn$. Then the generator $\Lcal_\psi$ has the Liouville property \eqref{liouville-levy}.
\end{corollary}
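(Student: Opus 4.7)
The plan is to chain together the Berg--Forst identification recalled just above the statement with the main Theorem. Concretely, the main Theorem says that the Liouville property is equivalent to $\{\psi=0\}=\{0\}$, while the Berg--Forst result says that the orthogonal subgroup $\{\psi=0\}^{\boxperp}$ is the smallest closed additive subgroup of $\rn$ containing $\bigcup_{t>0}\supp\mu_t$. So the task reduces to showing that the positivity hypothesis on $p_{t_0}$ forces this orthogonal subgroup to be all of $\rn$, and then to undo the $\boxperp$ operation.

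First, I would observe that if $p_{t_0}(x)>0$ for every $x\in\rn$, then $\supp\mu_{t_0}=\rn$, simply because any ball has strictly positive $\mu_{t_0}$-measure. Consequently the smallest closed additive subgroup containing $\bigcup_{t>0}\supp\mu_t$ already contains $\rn$, so by the Berg--Forst characterization $\{\psi=0\}^{\boxperp}=\rn$.

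Next I would invoke the duality $(H^{\boxperp})^{\boxperp}=H$ for closed subgroups $H\subset\rn$ (Pontryagin duality, which in this setting also follows from the structure result for closed subgroups of $\rn$ used in Lemma~\ref{lemma-2}). Since $(\rn)^{\boxperp}=\{0\}$ — any nonzero $g$ fails $e^{ig\cdot\xi}=1$ for some $\xi$, e.g.\ $\xi=g/|g|^2$ — applying $\boxperp$ once more yields $\{\psi=0\}=\{0\}$. The main Theorem then delivers the Liouville property.

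The only potentially delicate point is the double-orthogonal identity $H^{\boxperp\boxperp}=H$; if I wanted to avoid invoking Pontryagin duality I could argue directly: by Lemma~\ref{lemma-2} every $\eta\in\{\psi=0\}$ is a period of $\psi$, hence also of $\widehat\mu_t=e^{-t\psi}$, which forces $\mu_t$ to be concentrated on the countable union of hyperplanes $\{x\mid x\cdot\eta\in 2\pi\integer\}$ (a Lebesgue null set). This is incompatible with $\mu_{t_0}$ having a strictly positive density, so $\{\psi=0\}=\{0\}$ and the main Theorem applies. Either route makes the corollary essentially immediate, as the paper suggests.
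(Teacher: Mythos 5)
Your main route is precisely the paper's intended argument: the paper states the corollary ``implies immediately'' from the Berg--Forst characterization of $\{\psi=0\}^{\boxperp}$ as the smallest closed subgroup containing $\bigcup_{t>0}\supp\mu_t$, combined with the main Theorem, and your chain $\supp\mu_{t_0}=\rn \Rightarrow \{\psi=0\}^{\boxperp}=\rn \Rightarrow \{\psi=0\}=\{0\}$ is correct. One simplification: the ``delicate'' double-orthogonal identity $H^{\boxperp\boxperp}=H$ is not needed at all --- the trivial inclusion $\{\psi=0\}\subset\{\psi=0\}^{\boxperp\boxperp}$, which is immediate from the definition of $\boxperp$, together with $(\rn)^{\boxperp}=\{0\}$ already yields $\{\psi=0\}=\{0\}$. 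Your fallback argument is also valid and is in fact more self-contained than the paper's route, since it bypasses Berg--Forst entirely: you do not even need periodicity, because $\psi(\eta)=0$ alone gives $\widehat\mu_{t_0}(\eta)=e^{-t_0\psi(\eta)}=1$, and since $|e^{i\eta\cdot x}|=1$ with $\mu_{t_0}$ a probability measure this forces $e^{i\eta\cdot x}=1$ for $\mu_{t_0}$-a.e.\ $x$, i.e.\ concentration on the Lebesgue-null set $\{x \mid \eta\cdot x\in 2\pi\integer\}$ when $\eta\neq 0$; this contradicts absolute continuity of $\mu_{t_0}$ (strict positivity of the density is not even required for this step, only $\mu_{t_0}(dx)=p_{t_0}(x)\,dx$ --- strict positivity is what you use in the first route to get $\supp\mu_{t_0}=\rn$). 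So the first route matches the paper, while the second buys an elementary proof independent of \cite[Proposition 8.27]{ber-for}, at the cost of proving a slightly different reduction to the main Theorem.
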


\end{document}